\newcommand{\N}{\mathbb{N}}
\newcommand{\sub}{\subseteq}
\def\epsilon{\varepsilon}
\newtheorem{theo}{Theorem}
\newtheorem{lem}[theo]{Lemma}
\newtheorem{cor}[theo]{Corollary}
\newtheorem{defi}[theo]{Definition}
\numberwithin{equation}{section}
\title{On the range of a vector measure}
\author{Jos\'e Rodr\'{i}guez}
\address{Dpto. de Ingenier\'{i}a y Tecnolog\'{i}a de Computadores,
Facultad de Inform\'{a}tica, Universidad de Murcia, 30100 Espinardo (Murcia), Spain}
\email{joserr@um.es}
\subjclass[2010]{46A50, 46G10}
\keywords{Vector measure; norming subspace; Mazur property; Mackey topology; convex block compact}
\thanks{Research supported by projects MTM2017-86182-P (AEI/FEDER, UE)
and 20797/PI/18 (Fundaci\'on S\'eneca).}
\begin{document}

\maketitle

\begin{abstract}
Let $(\Omega,\Sigma,\mu)$ be a finite measure space, $Z$ be a Banach space
and $\nu:\Sigma \to Z^*$ be a countably additive $\mu$-continuous vector measure.
Let $X \sub Z^*$ be a norm-closed subspace which is norming for~$Z$. 
Write $\sigma(Z,X)$ (resp. $\mu(X,Z)$) to denote the weak (resp. Mackey) topology on~$Z$ (resp.~$X$)
associated to the dual pair $\langle X,Z\rangle$. 
Suppose that, either $(Z,\sigma(Z,X))$ has the Mazur property, or $(B_{X^*},w^*)$ is convex block compact
and $(X,\mu(X,Z))$ is complete. 
We prove that the range of~$\nu$ is contained in~$X$ if, for each $A\in \Sigma$ with $\mu(A)>0$,
the $w^*$-closed convex hull of $\{\frac{\nu(B)}{\mu(B)}: \, B\in \Sigma, \, B \sub A, \, \mu(B)>0\}$
intersects~$X$. This extends results obtained by Freniche [Proc. Amer. Math. Soc. 107 (1989), no.~1, 119--124]
when $Z=X^*$.
\end{abstract}

\section{Introduction}

Throughout this paper $(\Omega,\Sigma,\mu)$ is a finite measure space, $Z$
is a (real) Banach space and $X\sub Z^*$ is a norm-closed subspace which is {\em norming} for~$Z$,
meaning that the formula $|||z|||=\sup\{\langle x,z \rangle:x\in B_X\}$ defines
an equivalent norm on~$Z$. As usual, $B_X$ denotes the closed unit ball of~$X$ and
the evaluation of $z^*\in Z^*$ at $z\in Z$ is denoted by $\langle z^*,z\rangle$. 
The linear map $r: Z\to X^*$ defined by 
$$
	r(z)(x):=\langle x,z \rangle 
	\quad\mbox{for all }
	z\in Z \mbox{ and }x\in X
$$ 
is an isomorphic embedding and $r(Z)\sub X^*$ is norming for~$X$. Note that $r$ is a homeomorphism
between $(Z,\sigma(Z,X))$ and $(r(Z),w^*)$, where $\sigma(Z,X)$ is the topology on~$Z$ of pointwise convergence on~$X$
and $w^*$ is the weak$^*$-topology.

Given a countably additive $\mu$-continuous vector measure $\nu:\Sigma\to Z^*$
(shortly $\nu \in ca(\mu,Z^*)$), we study whether its range $\nu(\Sigma)=\{\nu(A):A\in \Sigma\}$ is contained in~$X$
provided that
\begin{equation}\label{eqn:GG}
	\overline{{\rm co}(\mathcal{A}_\nu(A))}^{w^*}\cap X \neq \emptyset
	\quad
	\mbox{for every }A\in \Sigma \mbox{ with }\mu(A)>0.\tag{G}
\end{equation}
Here ${\rm co}(\mathcal{A}_\nu(A))$ denotes the convex hull of the ``average range''
$$
	\mathcal{A}_\nu(A):=\Big\{\frac{\nu(B)}{\mu(B)}: \, B\in \Sigma, \, B \sub A, \, \mu(B)>0\Big\}.
$$

A particular case of remarkable interest arises when $Z=X^{*}$ and $\nu$ is the indefinite
Dunford integral of a scalarly measurable and scalarly bounded function $f:\Omega \to X$. 
In this case, the celebrated Geitz-Talagrand ``core'' theorem (see~\cite{gei-2} and \cite[5-2-2]{tal}) ensures 
that $\nu(\Sigma) \sub X$ (i.e. $f$ is Pettis integrable) whenever 
condition~\eqref{eqn:GG} holds. Motivated by this result, Freniche~\cite{fren} discussed the question of whether condition~\eqref{eqn:GG}
implies the inclusion $\nu(\Sigma)\sub X$ for an arbitrary $\nu\in ca(\mu,X^{**})$. While this question
remains open in full generality, he proved that the answer is affirmative under each of the following assumptions on~$X$:
\begin{itemize}
\item[(a)] every $w^*$-sequentially continuous linear functional on~$X^*$ is $w^*$-continuous, i.e.
$(X^*,w^*)$ has the Mazur property;
\item[(b)] every sequence in~$B_{X^*}$ admits a $w^*$-convergent subsequence, i.e.
 $(B_{X^*},w^*$) is sequentially compact.
\end{itemize}
Note that both (a) and~(b) hold if $X$ is weakly compactly generated and, more generally,
if $(B_{X^*},w^*)$ is Fr\'{e}chet-Urysohn (meaning that the $w^*$-closure of any 
set $C\sub B_{X^*}$ consists of limits of $w^*$-convergent sequences contained in~$C$).

In this paper we push a bit further Freniche's techniques to obtain generalizations
of his results above.
Our discussion involves the Mazur property and the completeness of the Mackey topology
of dual pairs associated to norming subspaces; these topics have been studied recently in 
\cite{bon-cas,gui-mar-rod,gui-mon,gui-mon-ziz}. 
Recall that a locally convex space $E$ is said to have the {\em Mazur property}
if every sequentially continuous linear functional from~$E$ to~$\mathbb{R}$ is continuous.
Our first main result reads as follows:

\begin{theo}\label{theo:MainA}
Suppose that $(Z,\sigma(Z,X))$ has the Mazur property.
If $\nu\in ca(\mu,Z^*)$ satisfies condition~\eqref{eqn:GG}, then $\nu(\Sigma) \sub X$.
\end{theo}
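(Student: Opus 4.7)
The plan is to combine the Mazur hypothesis with an $L^1$-weak compactness argument and an integral representation of $\overline{\mathrm{co}(\mathcal{A}_\nu(A))}^{w^*}$. Fix $A\in\Sigma$. Since the space of $\sigma(Z,X)$-continuous linear functionals on~$Z$ is exactly~$X$, showing $\nu(A)\in X$ is equivalent to $\sigma(Z,X)$-continuity of the functional $z\mapsto\langle\nu(A),z\rangle$. The Mazur hypothesis reduces this to \emph{sequential} $\sigma(Z,X)$-continuity: it suffices to prove that, for every $(z_n)\sub Z$ with $z_n\to 0$ in~$\sigma(Z,X)$, $\langle\nu(A),z_n\rangle\to 0$. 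Fix such a sequence; Banach-Steinhaus through the embedding $r:Z\to X^*$ gives $M:=\sup_n\|z_n\|<\infty$. Define scalar measures $\mu_n(E):=\langle\nu(E),z_n\rangle$; since $\nu$ is countably additive with $\nu\ll\mu$, Pettis's theorem yields $\|\nu(E)\|\to 0$ as $\mu(E)\to 0$, and so $|\mu_n(E)|\le M\|\nu(E)\|$ makes $(\mu_n)$ uniformly $\mu$-continuous. The Radon-Nikodym densities $h_n:=d\mu_n/d\mu\in L^1(\mu)$ then form a uniformly integrable family, relatively weakly compact in $L^1(\mu)$ by Dunford-Pettis. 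Because $\mu_n(A)=\int_A h_n\,d\mu$, our task reduces to showing that every weak subsequential limit $h\in L^1(\mu)$ of $(h_n)$ is zero a.e.

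To bring in~\eqref{eqn:GG}, observe that the integration map $T:L^\infty(\mu)\to Z^*$ defined by $T(g):=\int g\,d\nu$ is $w^*$-$w^*$ continuous: for each $z\in Z$, $\langle T(g),z\rangle=\int g\,d\mu_z$ with $\mu_z\ll\mu$, so $g\mapsto\langle T(g),z\rangle$ is a $w^*$-continuous linear functional on~$L^\infty(\mu)$. Simple nonnegative densities supported on~$A$ with $\int g\,d\mu=1$ are mapped onto $\mathrm{co}(\mathcal{A}_\nu(A))$ and are $w^*$-dense in the set of (bounded) $L^\infty$-densities supported on~$A$. Using norm-boundedness of $w^*$-convergent nets (Banach-Steinhaus) together with $w^*$-compactness of norm-bounded subsets of $L^\infty(\mu)$, one represents any $x\in X\cap\overline{\mathrm{co}(\mathcal{A}_\nu(A))}^{w^*}$ as $x=T(g)$ for some density $g\in L^\infty_+(\mu)$ with $\mathrm{supp}(g)\sub A$ and $\int g\,d\mu=1$. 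Condition~\eqref{eqn:GG} therefore yields, for every $A\in\Sigma$ with $\mu(A)>0$, a density $g_A$ such that $x_A:=T(g_A)\in X$. Because $x_A\in X$ and $z_n\to 0$ in $\sigma(Z,X)$,
\[
\int g_A h_n\,d\mu=\int g_A\,d\mu_n=\langle x_A,z_n\rangle\longrightarrow 0.
\]

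Finally, let $h$ be any weak subsequential limit of $(h_n)$; passing to the limit in the previous display along the corresponding subsequence gives $\int g_A h\,d\mu=0$ for every $A\in\Sigma$ with $\mu(A)>0$. If $h$ were not zero a.e., some set $A=\{h>c\}$ or $\{h<-c\}$ (with $c>0$) would have positive measure, and then $g_A\ge 0$ supported on~$A$ with $\int g_A\,d\mu=1$ would force $\int g_A h\,d\mu\ge c>0$ or $\le-c<0$, a contradiction. Hence $h=0$ a.e., completing the proof. I expect the main technical point to be the integral representation of elements of $\overline{\mathrm{co}(\mathcal{A}_\nu(A))}^{w^*}$ as $L^\infty$-integrals against~$\nu$: that is the step converting the qualitative condition~\eqref{eqn:GG} into the concrete identity $x_A=\int g_A\,d\nu$ that drives the final contradiction, and it is where care is needed to keep $g_A$ inside $L^\infty(\mu)$ rather than a larger space of finitely additive densities.
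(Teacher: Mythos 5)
Your reduction via the Mazur property to sequential $\sigma(Z,X)$-continuity, the uniform integrability of the densities $h_n$, and the passage to weak subsequential limits in $L_1(\mu)$ are all sound and parallel the paper. The genuine gap is exactly the step you flag as "the main technical point": the claimed representation of a point of $X\cap\overline{{\rm co}(\mathcal{A}_\nu(A))}^{w^*}$ as $T(g_A)=\int g_A\,d\nu$ with $g_A$ an $L^\infty(\mu)$-density supported on~$A$. The set of densities generating ${\rm co}(\mathcal{A}_\nu(A))$ (convex combinations of $\chi_B/\mu(B)$) is normalized in $L_1$ but unbounded in $L^\infty$, so $w^*$-compactness of norm-bounded subsets of $L^\infty$ does not apply; moreover $w^*$-convergent \emph{nets} need not be norm-bounded (Banach--Steinhaus only controls sequences), and even passing to $(L^\infty)^*$ only produces a finitely additive limit whose pairing with the densities $f_z\in L_1$ is not controlled. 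In fact the representation is simply false: take $\Omega=\N$, $\mu(\{n\})=2^{-n}$, $Z=c_0$, $X=Z^*=\ell_1$, and $\nu(A)=\sum_{n\in A}2^{-n}e_n$. Then $e_n=\nu(\{n\})/\mu(\{n\})\in\mathcal{A}_\nu(\Omega)$ and $e_n\to 0$ in $w^*$, so $0\in X\cap\overline{{\rm co}(\mathcal{A}_\nu(\Omega))}^{w^*}$, yet $0=\int g\,d\nu$ forces $g=0$, contradicting $\int g\,d\mu=1$. Condition~\eqref{eqn:GG} only guarantees that the intersection is non-empty; it gives no representable element, and producing one for the specific sets $A=\{h>c\}$ in your final step is essentially as hard as the theorem itself (indeed it would follow circularly from the conclusion $\nu(\Sigma)\sub X$).

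For contrast, the paper never represents core points. It argues by contradiction: if $\langle\nu(B),z_n\rangle\geq\epsilon$ for a $\sigma(Z,X)$-null sequence, uniform integrability plus Mazur's theorem and Egorov (Lemma~\ref{lem:EquiIntegrable}) produce a convex block subsequence $(\tilde z_k)$ of $(z_n)$ and a set $A\in\Sigma_B^+$ with $f_{\tilde z_k}\geq\eta$ on~$A$; then \emph{every} $z^*\in\overline{{\rm co}(\mathcal{A}_\nu(A))}^{w^*}$ satisfies $\langle z^*,\tilde z_k\rangle\geq\eta$, while any $x\in X$ has $\langle x,\tilde z_k\rangle\to 0$, so the intersection in~\eqref{eqn:GG} is empty --- a contradiction. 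That quantifier flip (a single bad set $A$ on which the whole core is uniformly bounded away from the action of a null sequence) is the idea your proposal is missing; without it, or a correct substitute for the representation step, the proof does not go through.
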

 
The Banach space $X$ is said to have {\em Efremov's property~($\mathcal{E}$)} 
if the $w^*$-closure of any convex set $C\sub B_{X^*}$ consists of limits of $w^*$-convergent
sequences contained in~$C$. Obviously, this property holds if $(B_{X^*},w^*)$ is Fr\'{e}chet-Urysohn. 
Under the Continuum Hypothesis there exist Banach spaces separating both properties (see~\cite{avi-mar-rod}), but it
is unknown what happens in general. The relevance of property~($\mathcal{E}$) to our discussion stems 
from the fact that if $X$ has property~($\mathcal{E}$), then $(r(Z),w^*)$ has the Mazur property (see \cite[Corollary~3.4]{gui-mar-rod})
and so does $(Z,\sigma(Z,X))$. As a consequence:

\begin{cor}\label{cor:Efremov}
Suppose that $X$ has Efremov's property~($\mathcal{E}$). 
If $\nu\in ca(\mu,Z^*)$ satisfies condition~\eqref{eqn:GG}, then $\nu(\Sigma) \sub X$.
\end{cor}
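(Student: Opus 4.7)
The plan is to reduce the corollary directly to Theorem \ref{theo:MainA} via the bridge already spelled out in the paragraph preceding the statement: Efremov's property $(\mathcal{E})$ on $X$ forces the Mazur property on $(Z,\sigma(Z,X))$. Once this reduction is in place, the conclusion $\nu(\Sigma)\sub X$ is immediate from the previous theorem, so there is essentially no combinatorial or measure-theoretic work left to do here; the whole task is to justify the transfer of the Mazur property from the $w^*$-topology on $r(Z)$ to the $\sigma(Z,X)$-topology on $Z$.

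The first step is to recall from the introduction that the canonical map $r\colon Z\to X^*$ defined by $r(z)(x)=\langle x,z\rangle$ is an isomorphic embedding whose image $r(Z)\sub X^*$ is norming for~$X$, and that $r$ is a homeomorphism between $(Z,\sigma(Z,X))$ and $(r(Z),w^*)$. This homeomorphism is the only transport mechanism needed: the Mazur property is a purely topological/linear invariant, so it passes through any linear homeomorphism.

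The second step is to invoke \cite[Corollary~3.4]{gui-mar-rod}, explicitly cited in the paragraph above the corollary, to the effect that if $X$ has Efremov's property~($\mathcal{E}$) then, for any norm-closed subspace $Y\sub X^*$ which is norming for~$X$, the pair $(Y,w^*)$ has the Mazur property. Applying this with $Y=r(Z)$ yields that $(r(Z),w^*)$ has the Mazur property, and hence, by the homeomorphism in the previous paragraph, so does $(Z,\sigma(Z,X))$.

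The third and final step is simply to feed this into Theorem~\ref{theo:MainA}: since $(Z,\sigma(Z,X))$ has the Mazur property and $\nu\in ca(\mu,Z^*)$ satisfies condition~\eqref{eqn:GG}, the theorem gives $\nu(\Sigma)\sub X$, as required. The only potential obstacle is verifying that Corollary~3.4 of \cite{gui-mar-rod} really applies to the embedded copy $r(Z)$ (not just to ``abstract'' norming subspaces), but this is ensured by the fact that $r(Z)$ is norm-closed in~$X^*$ (because $r$ is an isomorphic embedding of the Banach space~$Z$) and norming for~$X$, as noted in the introduction.
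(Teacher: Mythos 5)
Your proof is correct and follows the same route as the paper: invoke \cite[Corollary~3.4]{gui-mar-rod} to get the Mazur property of $(r(Z),w^*)$, transfer it to $(Z,\sigma(Z,X))$ via the linear homeomorphism~$r$, and apply Theorem~\ref{theo:MainA}. This is exactly the argument sketched in the paragraph preceding the corollary, with the details of the transfer and the applicability of the cited result filled in appropriately.
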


Our second main result is:

\begin{theo}\label{theo:MainB}
Suppose that $(B_{X^*},w^*)$ is convex block compact and that $(X,\mu(X,Z))$ is complete.
If $\nu\in ca(\mu,Z^*)$ satisfies condition~\eqref{eqn:GG}, then $\nu(\Sigma) \sub X$.
\end{theo}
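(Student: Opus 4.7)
My plan is a standard exhaustion-plus-contradiction scheme. Define $\mathcal{F}:=\{A\in\Sigma:\nu(A)\in X\}$: because $X$ is norm-closed in~$Z^*$ and $\nu$ is countably additive, $\mathcal{F}$ is stable under countable disjoint unions and under differences of nested pairs, so a Zorn-type maximality argument reduces the theorem to the following local statement: every $A\in\Sigma$ with $\mu(A)>0$ contains some $B$ with $\mu(B)>0$ and $\nu(B)\in X$. I would fix such~$A$ and argue by contradiction, assuming no such $B$ exists; in particular $\nu(A)\notin X$.

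My central step would be to construct a sequence $(y_n)$ in~$X$ which (i)~converges pointwise on~$Z$ to~$\nu(A)$ and (ii)~is Cauchy for the Mackey topology $\mu(X,Z)$; completeness of $(X,\mu(X,Z))$ would then yield a Mackey-limit $y\in X$ which by~(i) coincides with~$\nu(A)$, contradicting $\nu(A)\notin X$. To build $(y_n)$, I would pick a refining sequence of finite measurable partitions $\{A_i^n\}_i$ of~$A$ generating $\Sigma_A$ modulo $\mu$-null sets; on each piece apply~\eqref{eqn:GG} to choose $x_i^n\in X\cap\overline{\mathrm{co}(\mathcal{A}_\nu(A_i^n))}^{w^*}$, and set $y_n:=\sum_i \mu(A_i^n)\,x_i^n\in X$. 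For each $z\in Z$ the scalar measure $B\mapsto\langle\nu(B),z\rangle$ on~$\Sigma_A$ has a Radon--Nikodym derivative $f_z\in L^\infty(\mu)$, and a martingale-type estimate on the refining partitions gives~(i), since $\langle x_i^n,z\rangle$ lies in the interval of averages of $f_z$ over measurable subsets of~$A_i^n$, which collapses around $\operatorname{avg}_{A_i^n}f_z$ as the partitions refine.

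For (ii) one needs uniform control of $\langle y_n-y_m,z\rangle$ for $z$ ranging over an arbitrary absolutely convex $\sigma(Z,X)$-compact $K\sub Z$. Since \eqref{eqn:GG} only approximates averages pointwise on~$Z$, direct control over~$K$ is not automatic; this is where I would invoke the convex block compactness of $(B_{X^*},w^*)$. Concretely, for any bounded sequence $(z_k)$ in~$Z$ the images $(r(z_k))$ lie in a ball of~$X^*$ and hence admit a convex block subsequence that is $w^*$-convergent in~$X^*$; applied to sequences drawn from~$K$, this upgrades the pointwise approximations in~\eqref{eqn:GG} to uniform approximations over~$K$, yielding the Mackey-Cauchy property of~$(y_n)$. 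The hard part will be precisely this uniformization step, and the two hypotheses of the theorem play complementary roles in it: convex block compactness sequentializes and uniformizes the $w^*$-approximations supplied by~\eqref{eqn:GG}, while Mackey completeness ensures the limit of $(y_n)$ lands inside~$X$ so as to produce the contradiction.
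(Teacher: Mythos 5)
The proposal has a genuine gap at its central step~(i), and it is exactly the difficulty the theorem is about. Condition~\eqref{eqn:GG} only tells you that your chosen $x_i^n$ satisfies $\langle x_i^n,z\rangle\in \overline{\mathrm{co}}\{\frac{1}{\mu(B)}\int_B f_z\,d\mu : B\in\Sigma^+,\ B\subseteq A_i^n\}$, i.e.\ it lies somewhere in the closed interval between the infimum and supremum of the averages of $f_z$ over positive-measure subsets of $A_i^n$. That interval does \emph{not} collapse around $\frac{1}{\mu(A_i^n)}\int_{A_i^n}f_z\,d\mu$ as the partitions refine: take $\Omega=[0,1]$ with dyadic partitions and $f_z=\mathbf{1}_C$ for a closed nowhere dense set $C$ of positive measure; every dyadic interval meeting $C$ also meets its complement in positive measure, so on a family of atoms of total measure at least $\mu(C)$ the interval of averages stays essentially $[0,1]$ for all $n$. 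A martingale estimate controls the conditional expectations $E[f_z\mid\mathcal{F}_n]$, not arbitrary points of the convex hull of averages over \emph{subsets} of the atoms; and since each $x_i^n$ is a single vector that must serve all $z\in Z$ simultaneously, you cannot re-choose it per $z$ to force $\langle y_n,z\rangle\to\langle\nu(A),z\rangle$. If \eqref{eqn:GG} did yield such pointwise approximation of $\nu$ by simple $X$-valued averages, the theorem would be almost immediate, which is a sign the step cannot be right. There is also a secondary problem: $(\Omega,\Sigma,\mu)$ is an arbitrary finite measure space, so no sequence of finite partitions generating $\Sigma_A$ modulo null sets need exist, and passing to a countably generated sub-$\sigma$-algebra does not make the uncountably many derivatives $f_z$ measurable with respect to it.

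Step~(ii) is not an argument but a statement of intent: you acknowledge that the ``uniformization over $K$'' is the hard part, and nothing in the proposal indicates how convex block compactness of $(B_{X^*},w^*)$ would turn the purely pointwise information of \eqref{eqn:GG} into Mackey--Cauchy estimates for $(y_n)$. The paper exploits the two hypotheses quite differently: completeness of $(X,\mu(X,Z))$ is used through Grothendieck's criterion, which says $\nu(B)\in X$ as soon as the restriction $\nu(B)|_K$ is $\sigma(Z,X)$-continuous on every absolutely convex $\sigma(Z,X)$-compact $K\subseteq Z$ --- no Cauchy sequence in $X$ is ever constructed. Convex block compactness enters only to show that every such $K$ is relatively convex block compact in $(Z,\sigma(Z,X))$ (via $r$ and the $w^*$-closedness of $r(K)$), and then the continuity of $\nu(B)|_K$ is proved by contradiction: uniform integrability of $\{f_{z_\alpha}\}$, Mazur's theorem in $L_1(\mu)$, and the $\sigma(Z,X)$-sequential continuity of each $\nu(C)$ (Theorem~\ref{theo:SequentialContinuity}, itself a consequence of \eqref{eqn:GG}) produce a vector $w\in Z$ with $\langle x,w\rangle=0$ for the given $x\in X$ but $\langle z^*,w\rangle\geq\eta$ for all $z^*\in\overline{\mathrm{co}(\mathcal{A}_\nu(A))}^{w^*}$, contradicting \eqref{eqn:GG}. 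Your exhaustion reduction at the start is harmless but does not touch this core difficulty; I would redirect the argument along the Grothendieck-criterion route.
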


Here $\mu(X,Z)$ is the Mackey topology on~$X$ associated to the dual 
pair~$\langle X,Z \rangle$, i.e. the topology on~$X$ of uniform convergence on all
absolutely convex $\sigma(Z,X)$-compact subsets of~$Z$.
According to a result of Grothendieck 
(see e.g. \cite[\S21.9]{kot}), $(X,\mu(X,Z))$ is complete if, and only if,
the $\sigma(Z,X)$-continuity of any linear functional $\varphi:Z\to \mathbb{R}$ 
(i.e. the fact that $\varphi\in X$) is equivalent to the $\sigma(Z,X)$-continuity of the restriction~$\varphi|_K$ for every absolutely convex $\sigma(Z,X)$-compact set $K \sub Z$. 
We stress that the completeness of $(X,\mu(X,Z))$ is weaker
than the Mazur property of $(Z,\sigma(Z,X))$, see \cite[Proposition~10]{gui-mon-ziz}.

Recall that $(B_{X^*},w^*)$ is said to be {\em convex block compact}
if every sequence in~$B_{X^*}$ admits a convex block subsequence which is $w^*$-convergent.
By a {\em convex block subsequence} 
of a sequence $(g_n)_{n\in \N}$ in a linear space we mean a sequence $(h_k)_{k\in \N}$ of the form 
$$
	h_k=\sum_{n\in I_k}a_n g_n,
$$
where $(I_k)_{k\in \N}$ is a sequence of finite subsets of~$\N$ with $\max(I_k) < \min(I_{k+1})$ and $(a_n)_{n\in \N}$ is a sequence
of non-negative real numbers such that $\sum_{n\in I_k}a_n=1$ for all~$k \in \N$.
Convex block compactness is strictly weaker than sequential compactness. Indeed, a result of Bourgain
states that $(B_{X^*},w^*)$ is convex block compact whenever $X$ contains no isomorphic copy of~$\ell_1$, 
see \cite[Proposition~3.11]{bou:79} (cf. \cite{sch-3} and~\cite[Proposition~11]{pfi-J}), while
there exist Banach spaces not containing isomorphic copies of~$\ell_1$ whose
dual ball is not $w^*$-sequentially compact, see \cite{hag-ode,hay10}. At this point it is worth mentioning
that $(B_{X^*},w^*)$ is convex block compact whenever $X$ has Efremov's property~$(\mathcal{E})$ (see \cite[Theorem~3.2.11]{gon4})
or $X=C(K)$ for a compact space~$K$ such that all Radon probabilities on~$K$ have countable type (see \cite[3B]{hay-lev-ode}, cf. \cite{kru-ple}).
It is easy to see that the latter implies Bourgain's result, since every Radon probability on~$(B_{X^*},w^*)$ has countable type 
if $X$ contains no isomorphic copy of~$\ell_1$, see \cite[Proposition~B.1]{avi-mar-ple}.

The Banach-Dieudonn\'{e} theorem and Grothendieck's aforementioned result imply that $(X,\mu(X,X^*))$ is complete for any Banach space~$X$. 
Therefore, as a particular case of Theorem~\ref{theo:MainB} we get the following improvement of
\cite[Theorem~2]{fren}:

\begin{cor}\label{cor:Bidual}
Suppose that $(B_{X^*},w^*)$ is convex block compact.
If $\nu\in ca(\mu,X^{**})$ satisfies condition~\eqref{eqn:GG}, then $\nu(\Sigma) \sub X$.
\end{cor}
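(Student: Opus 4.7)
The plan is to read Corollary~\ref{cor:Bidual} as the particular case of Theorem~\ref{theo:MainB} obtained by setting $Z:=X^{*}$ and identifying $X$ with its canonical image in $X^{**}=Z^{*}$ via the evaluation map. Under this identification, one only needs to verify that the general hypotheses of Theorem~\ref{theo:MainB} are automatically satisfied in this bidual situation, so that the conclusion transfers.

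The verifications are short. First, the canonical embedding of $X$ into $X^{**}$ is isometric, so $X$ is a norm-closed subspace of $Z^{*}$; moreover, for every $x^{*}\in X^{*}=Z$ one has $\|x^{*}\|=\sup\{\langle x,x^{*}\rangle:x\in B_X\}$ by the very definition of the dual norm, so $X$ is norming for~$Z$ (with $|||\cdot|||=\|\cdot\|$). Second, the convex block compactness of $(B_{X^{*}},w^{*})$ is the standing assumption of the corollary, and $(X,\mu(X,Z))=(X,\mu(X,X^{*}))$ is complete: this is exactly the Banach-Dieudonn\'e/Grothendieck consequence highlighted in the paragraph preceding the statement, valid for any Banach space.

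With the hypotheses of Theorem~\ref{theo:MainB} in force, and with $\nu\in ca(\mu,X^{**})=ca(\mu,Z^{*})$ satisfying~\eqref{eqn:GG}, that theorem directly yields $\nu(\Sigma)\sub X$, which is the desired conclusion. There is essentially no obstacle here: the statement is designed as a clean packaging of Theorem~\ref{theo:MainB} in the bidual setting, with the Mackey completeness hypothesis absorbed into the classical Banach-Dieudonn\'e theorem. The only point worth a second glance is purely notational, namely that the $w^{*}$-topology appearing in condition~\eqref{eqn:GG} is the same before and after the identification, which is clear since on $X^{**}=Z^{*}$ this topology is $\sigma(Z^{*},Z)$ regardless of how the space is labelled.
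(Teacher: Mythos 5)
Your proposal is correct and is exactly the paper's argument: the corollary is deduced as the special case $Z=X^{*}$ of Theorem~\ref{theo:MainB}, with the completeness of $(X,\mu(X,X^{*}))$ supplied by the Banach--Dieudonn\'e theorem together with Grothendieck's completeness criterion, precisely as indicated in the paragraph preceding the statement. Your additional verifications (that $X$ is norming for $X^{*}$ and that the identification does not change the $w^{*}$-topology in~\eqref{eqn:GG}) are accurate and harmless.
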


The proofs of Theorems~\ref{theo:MainA} and~\ref{theo:MainB} are included in the next section. We
finish this introduction by exhibiting some examples of couples $(X,Z)$ for which our results might be applied (besides
that of $Z=X^*$): 
\begin{itemize}
\item Let $Z$ be a non-reflexive Banach space and pick any $z^{**}\in Z^{**}\setminus Z$. Then
$X=\ker(z^{**}) \sub Z^*$ is norming for~$Z$ (cf. \cite[Lemma~11]{gui-mon-ziz}).
\item Let $Z=\ell_1(\Gamma)$ for a non-empty set~$\Gamma$. Then any
norm-closed subspace $X \sub Z^*=\ell_\infty(\Gamma)$
containing $c_0(\Gamma)$ is norming for~$Z$. 
\item Let $Z=\ell_1(K)$ for a compact space~$K$. Then any norm-closed subspace $X \sub Z^*=\ell_\infty(K)$
containing $C(K)$ is norming for~$Z$.
\end{itemize}

\section{Proofs of Theorems~\ref{theo:MainA} and~\ref{theo:MainB}}

Recall that a set $\mathcal{F}\sub L_1(\mu)$ is called {\em uniformly integrable} if it is norm-bounded
and for every $\epsilon>0$ there is $\delta>0$ such that $\sup_{f\in \mathcal{F}}\int_A |f| \, d\mu\leq \epsilon$
for every $A\in \Sigma$ with $\mu(A)\leq\delta$. This is equivalent to being relatively weakly compact in~$L_1(\mu)$,
see e.g. \cite[p.~76, Theorem~15]{die-uhl-J}.

\begin{defi}
Given $\nu \in ca(\mu,Z^*)$ and $z\in Z$, we denote by $\langle \nu,z\rangle \in ca(\mu,\mathbb{R})$
the composition of $\nu$ and~$z$ (i.e $\langle \nu,z\rangle(A):=\langle \nu(A),z\rangle$ for all $A\in \Sigma$), and we write 
$$
	f_{z}:=\frac{d\langle \nu,z\rangle}{d\mu}
$$
to denote its Radon-Nikod\'{y}m derivative with respect to~$\mu$.
\end{defi}

\begin{lem}\label{lem:RN-UI} 
Let $\nu \in ca(\mu,Z^*)$. If $C \sub Z$ is norm-bounded, then $\{f_{z}: z\in C\}$
is a uniformly integrable subset of~$L_1(\mu)$.
\end{lem}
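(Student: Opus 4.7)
The plan is to bound $\int_A |f_z|\, d\mu$ uniformly in $z\in C$ using the semivariation of~$\nu$, together with its $\mu$-continuity.

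The first step is the identification $\int_A |f_z|\, d\mu = |\langle \nu,z\rangle|(A)$, which just says that $|f_z|$ is the Radon-Nikod\'{y}m derivative of the total variation of the scalar measure $\langle \nu,z\rangle$. Viewing $z\in Z$ as an element of~$Z^{**}$ via the canonical isometric embedding, the scalar measure $\langle \nu,z\rangle$ is the composition of $\nu:\Sigma \to Z^*$ with this element of~$Z^{**}$, and therefore
$$
|\langle\nu,z\rangle|(A) \leq \|z\|\cdot \|\nu\|(A),
$$
where $\|\nu\|$ denotes the semivariation of~$\nu$ regarded as a $Z^*$-valued vector measure.

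The remainder of the argument is an appeal to two classical properties of the semivariation of a countably additive vector measure (see \cite[Chapter~I]{die-uhl-J}). First, countable additivity forces $\|\nu\|(\Omega)<\infty$. Second, the $\mu$-continuity of~$\nu$ upgrades to $\|\nu\|(A)\to 0$ as $\mu(A)\to 0$ in the strong $\epsilon$-$\delta$ sense. Setting $M:=\sup_{z\in C}\|z\|<\infty$, the inequality $\int_A |f_z|\, d\mu \leq M\,\|\nu\|(A)$ then yields, on the one hand, the uniform $L_1$-bound $\|f_z\|_1 \leq M\,\|\nu\|(\Omega)$ (by taking $A=\Omega$), and on the other, $\sup_{z\in C}\int_A|f_z|\, d\mu \leq \epsilon$ once $\mu(A)$ is small enough that $\|\nu\|(A)\leq \epsilon/M$. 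Together these are exactly the definition of uniform integrability.

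I do not expect a genuine obstacle here: the only mildly delicate point is the recollection that $\mu$-continuity of a vector measure transfers in the strong $\epsilon$-$\delta$ sense to its semivariation, which is the classical content of the Bartle-Dunford-Schwartz circle of ideas. Everything else reduces to Radon-Nikod\'{y}m and the duality bound above.
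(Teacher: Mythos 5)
Your proof is correct and follows essentially the same route as the paper: both reduce uniform integrability to the bound $\int_A|f_z|\,d\mu=|\langle\nu,z\rangle|(A)\leq \|z\|$ times a quantity controlled by the range of~$\nu$ on subsets of~$A$, and then invoke boundedness of the range together with the $\epsilon$-$\delta$ form of $\mu$-continuity. The only difference is bookkeeping: you phrase the bound via the semivariation $\|\nu\|(A)$ and cite its classical properties, while the paper uses the elementary inequality $|\langle\nu,z\rangle|(A)\leq 2\sup_{B\sub A}|\langle\nu(B),z\rangle|\leq 2\|z\|\sup_{B\sub A}\|\nu(B)\|$ to avoid appealing to the semivariation explicitly.
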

\begin{proof} Write $M:=\sup_{z\in C}\|z\|$. For every $A\in \Sigma$ and every $z\in C$ we have
\begin{equation}\label{eqn:UI}
	\int_A|f_z| \, d\mu=|\langle \nu,z\rangle|(A) \leq 2 \sup_{\substack{B \in \Sigma \\ B\sub A}}|\langle \nu(B),z \rangle|
	\leq 2M \sup_{\substack{B \in \Sigma \\ B\sub A}}\|\nu(B)\|,
\end{equation}
where $|\langle \nu,z\rangle|$ is the variation of $\langle \nu,z\rangle$. Since $\nu$ has norm-bounded range 
and is $\mu$-continuous,
the uniform integrability of $\{f_{z}: z\in C\}$ follows from~\eqref{eqn:UI}.
\end{proof}

From now on we write $\Sigma^+:=\{A\in \Sigma:\mu(A)>0\}$
and $\Sigma_A^+:=\{B\in \Sigma^+: B \sub A\}$ for all $A\in \Sigma^+$.

\begin{lem}\label{lem:EquiIntegrable}
Let $(g_n)_{n\in \N}$ be a uniformly integrable sequence in~$L_1(\mu)$ for which there exist $\epsilon>0$ and $B\in \Sigma^+$
such that $\int_B g_n\, d\mu\geq \epsilon$ for all $n\in \N$. Then there exist a convex block subsequence 
$(h_k)_{k\in \N}$ of~$(g_n)_{n\in \N}$, $\eta>0$ and $A\in \Sigma_B^+$ such that $h_k\geq \eta$ on~$A$ for all $k\in \N$. 
\end{lem}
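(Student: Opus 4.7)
The plan is to use the uniform integrability to extract a weakly convergent subsequence in~$L_1(\mu)$, apply Mazur's theorem in a blockwise fashion to produce a norm-convergent convex block subsequence, and finally invoke Egorov's theorem on an a.e.-convergent sub-subsequence to locate the set~$A$.

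First I would note that, by the characterization of uniform integrability recalled at the start of this section, $(g_n)_{n\in \N}$ is relatively weakly compact in~$L_1(\mu)$. After replacing $(g_n)$ by a subsequence we may assume $g_n \to g$ weakly in~$L_1(\mu)$ for some $g\in L_1(\mu)$. Testing the weak limit against $\chi_B$ gives $\int_B g\, d\mu \geq \epsilon$, so $\mu(\{g>0\}\cap B)>0$, and one can fix $\eta>0$ small enough that $A_0 := \{g \geq 2\eta\} \cap B$ still has positive measure.

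Next I would construct the convex block subsequence by a standard diagonal application of Mazur's theorem: inductively pick $0=m_0<m_1<\dots$ and, for each $k \in \N$, a convex combination
$h_k = \sum_{n \in I_k} a_n g_n$
with $I_k \sub \{m_{k-1}+1,\dots,m_k\}$ satisfying $\|h_k - g\|_{L_1(\mu)} < 1/k$; this is possible because Mazur, applied to the still-weakly-convergent tail $(g_n)_{n > m_{k-1}}$, yields such a combination using only finitely many indices, after which one enlarges $m_k$ to swallow them. The resulting $(h_k)_{k\in \N}$ is a convex block subsequence of~$(g_n)$ and satisfies $h_k \to g$ in~$L_1(\mu)$. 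Passing to a further subsequence --- which remains a convex block subsequence of~$(g_n)$, since the blocks $I_k$ are merely thinned, never split or reordered --- we may assume $h_k \to g$ $\mu$-a.e. Applying Egorov's theorem on~$A_0$ produces a measurable set $A \sub A_0$ with $\mu(A)>0$ on which the convergence is uniform; then $h_k \geq g - \eta \geq \eta$ on~$A$ for all $k$ exceeding some~$k_0$, and dropping the first $k_0-1$ terms gives a convex block subsequence of~$(g_n)$ with the required property.

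The only place demanding care is the bookkeeping that each of the successive passages --- to a weakly convergent subsequence of~$(g_n)$, to Mazur's convex combinations arranged in disjoint blocks, to an a.e.-convergent sub-subsequence, and finally to a tail --- yields an object which is still a convex block subsequence of the original $(g_n)_{n\in \N}$. Each of these operations preserves both the disjointness and the increasing ordering of the index blocks, so the composition does as well; this should be stated once explicitly in the write-up to avoid any ambiguity.
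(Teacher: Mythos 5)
Your proposal is correct and follows essentially the same route as the paper: uniform integrability gives relative weak compactness, a weakly convergent subsequence plus Mazur yields a norm-convergent convex block subsequence, and an a.e.-convergent further subsequence combined with Egorov produces the set $A$ and the bound $\eta$. The only cosmetic difference is that you identify the positivity set from the weak limit $g$ before building the blocks, while the paper reads it off the norm limit $h$ of the convex blocks; both are equivalent.
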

\begin{proof}
Since $(g_n)_{n\in \N}$ is relatively weakly compact in~$L_1(\mu)$, it admits weakly convergent subsequences, by the Eberlein-Smulyan theorem. 
Thus, Mazur's theorem applied to any weakly convergent subsequence ensures 
the existence of a convex block subsequence 
$(h_k)_{k\in \N}$ of~$(g_n)_{n\in \N}$ which converges in norm to some $h\in L_1(\mu)$.
By passing to a further subsequence of~$(h_k)_{k\in \N}$, not relabeled, we can assume that $(h_k)_{k\in \N}$ also converges to~$h$ $\mu$-a.e.

Note that $\int_B h_k\, d\mu\geq \epsilon$ for all $k\in \N$, hence $\int_B h\, d\mu\geq \epsilon$ and so 
there exist $\eta'>0$ and $A'\in \Sigma_B^+$ such that $h\geq \eta'$ on~$A'$.
By Egorov's theorem, there is $A\in \Sigma_{A'}^+$ such that $(h_k)_{k\in \N}$ converges to~$h$ uniformly on~$A$.
Take any $0<\eta<\eta'$. Then there is $k_0\in \N$ such that $h_k\geq \eta$ on~$A$ for all $k \geq k_0$. Thus, $(h_k)_{k\geq k_0}$
is a convex block subsequence of $(g_n)_{n\in \N}$ satisfying the required property.
\end{proof}

We say that a subset of a locally convex space is {\em relatively convex block compact}
if every sequence in it admits a convergent convex block subsequence.

\begin{lem}\label{lem:bdd}
A subset of~$Z$ is norm-bounded if it is either relatively compact or relatively convex block compact in~$(Z,\sigma(Z,X))$.
\end{lem}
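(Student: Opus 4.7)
The plan is to reduce both parts to the single observation that every $\sigma(Z,X)$-bounded subset $C\sub Z$ is norm-bounded. Indeed, the map $r:Z\to X^*$ from the introduction is an isomorphic embedding which carries $\sigma(Z,X)$ to the $w^*$-topology, so a $\sigma(Z,X)$-bounded set $C$ yields a $w^*$-bounded set $r(C)\sub X^*$; the Banach--Steinhaus theorem, applied to the pointwise bounded family $r(C)$ of continuous linear functionals on the Banach space~$X$, then gives $\sup_{z\in C}\|r(z)\|<\infty$, and norm-boundedness of~$C$ in~$Z$ follows.

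For the relatively compact case, the $\sigma(Z,X)$-closure of~$C$ is $\sigma(Z,X)$-compact by assumption; every continuous seminorm on a locally convex space is bounded on a compact set, so this closure is $\sigma(Z,X)$-bounded, and the reduction applies.

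For the relatively convex block compact case I argue by contradiction. If $C$ fails to be norm-bounded, then by the reduction it also fails to be $\sigma(Z,X)$-bounded, so there exists $x\in X$ with $\sup_{z\in C}|\langle x,z\rangle|=\infty$. Picking $z_n\in C$ with $|\langle x,z_n\rangle|\to\infty$, passing to a subsequence and possibly replacing $x$ by~$-x$, I may assume $\langle x,z_n\rangle\to+\infty$. By hypothesis, $(z_n)_{n\in\N}$ admits a $\sigma(Z,X)$-convergent convex block subsequence $h_k=\sum_{n\in I_k}a_n z_n$. Since $\max(I_k)<\min(I_{k+1})$ forces $\min(I_k)\to\infty$, for every $M>0$ eventually $\langle x,z_n\rangle\geq M$ for all $n\in I_k$, and therefore $\langle x,h_k\rangle=\sum_{n\in I_k}a_n\langle x,z_n\rangle\geq M$. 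Hence $\langle x,h_k\rangle\to+\infty$, contradicting the $\sigma(Z,X)$-convergence of $(h_k)_{k\in\N}$.

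The only delicate point is this last convex-combination step: a priori a convex block could absorb oscillating signs in~$\langle x,z_n\rangle$, but the preliminary subsequencing produces monotone divergence and then $\min(I_k)\to\infty$ forces each $h_k$ to inherit the large values of $\langle x,\cdot\rangle$ along the tail. Everything else is a routine use of uniform boundedness together with the isomorphism~$r$.
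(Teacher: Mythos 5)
Your proof is correct and follows essentially the same route as the paper: reduce norm-boundedness to pointwise boundedness on~$X$ via the embedding~$r$ and the Banach--Steinhaus theorem, handle the compact case by the boundedness of compact sets, and handle the convex block compact case by the scalar observation that a sequence with $|\langle x,z_n\rangle|\to\infty$ admits no convergent convex block subsequence (which is exactly the paper's remark that relatively convex block compact subsets of~$\mathbb{R}$ are bounded, here spelled out via the $\min(I_k)\to\infty$ argument).
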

\begin{proof} Let $S \sub Z$.
If $S$ is relatively compact in $(Z,\sigma(Z,X))$, then $\overline{r(S)}^{w^*}\sub X^*$ is $w^*$-compact, hence $r(S)$ is norm-bounded and so is~$S$. 

If $S$ is relatively convex block compact in~$(Z,\sigma(Z,X))$, then 
$r(S)$ is relatively convex block compact in~$(X^*,w^*)$. This 
implies that $r(S)$ is norm-bounded, by the Banach-Steinhaus theorem
and the fact that every relatively convex block compact subset of~$\mathbb{R}$ is bounded.
It follows that $S$ is norm-bounded.
\end{proof}

It is clear that Theorem~\ref{theo:MainA} follows immediately from the following
generalization of \cite[Theorem~1]{fren}:
 
\begin{theo}\label{theo:SequentialContinuity}
If $\nu\in ca(\mu,Z^*)$ satisfies condition~\eqref{eqn:GG}, then $\nu(B)$
is $\sigma(Z,X)$-sequentially continuous for every $B\in \Sigma$.
\end{theo}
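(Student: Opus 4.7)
The plan is to argue by contradiction. Suppose $\nu(B)$ fails to be $\sigma(Z,X)$-sequentially continuous at~$0$; then I can choose a sequence $(z_n)_{n \in \N}$ in~$Z$ with $z_n \to 0$ in~$\sigma(Z,X)$ such that $\langle \nu(B), z_n\rangle \not\to 0$. After passing to a subsequence and possibly replacing $z_n$ by~$-z_n$, I may assume that $\langle \nu(B), z_n\rangle \geq \epsilon$ for some $\epsilon > 0$ and all~$n$. Since $r(z_n) \to 0$ in $(X^*,w^*)$, the Banach--Steinhaus theorem (applied to the Banach space~$X$) yields that $(r(z_n))$ is norm-bounded, hence $(z_n)$ is norm-bounded in~$Z$.

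Next I translate the problem to~$L_1(\mu)$ via Radon--Nikod\'ym derivatives. Setting $g_n := f_{z_n}$, Lemma~\ref{lem:RN-UI} guarantees that $(g_n)_{n \in \N}$ is uniformly integrable, and the assumption reads $\int_B g_n \, d\mu \geq \epsilon$ for all~$n$. By Lemma~\ref{lem:EquiIntegrable} there exist a convex block subsequence $(h_k)_{k \in \N}$ of~$(g_n)_{n \in \N}$, a number $\eta > 0$ and a set $A \in \Sigma_B^+$ with $h_k \geq \eta$ on~$A$ for all~$k$. Writing $h_k = \sum_{n \in I_k} a_n g_n$ and defining $\tilde z_k := \sum_{n \in I_k} a_n z_n$, linearity of the Radon--Nikod\'ym derivative gives $h_k = f_{\tilde z_k}$; moreover, since each $\langle x, z_n \rangle \to 0$ and the $a_n$ are non-negative with $\sum_{n \in I_k} a_n = 1$, the convex block subsequence inherits $\tilde z_k \to 0$ in~$\sigma(Z,X)$.

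Now I invoke condition~\eqref{eqn:GG} applied to~$A$: pick $x^* \in \overline{{\rm co}(\mathcal{A}_\nu(A))}^{w^*} \cap X$. For every $C \in \Sigma_A^+$, the lower bound $h_k \geq \eta$ on~$A \supseteq C$ yields
\begin{equation*}
    \Bigl\langle \frac{\nu(C)}{\mu(C)}, \tilde z_k \Bigr\rangle
    = \frac{1}{\mu(C)}\int_C h_k \, d\mu \geq \eta,
\end{equation*}
so every element of $\mathcal{A}_\nu(A)$, and therefore (by $w^*$-continuity of evaluation at~$\tilde z_k$ and convexity) every element of $\overline{{\rm co}(\mathcal{A}_\nu(A))}^{w^*}$, pairs with $\tilde z_k$ to give at least~$\eta$. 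In particular $\langle x^*, \tilde z_k \rangle \geq \eta$ for every~$k$. But $x^* \in X$ and $\tilde z_k \to 0$ in $\sigma(Z,X)$ force $\langle x^*, \tilde z_k \rangle \to 0$, a contradiction.

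The substantive content is essentially packaged in the two preceding lemmas, so no single step is genuinely hard; the point requiring the most care is verifying that the convex block construction transfers cleanly back to~$Z$ (i.e.\ that $h_k = f_{\tilde z_k}$ and that $\tilde z_k \to 0$ in~$\sigma(Z,X)$), and then recognising that condition~\eqref{eqn:GG} is exactly what is needed to promote the uniform lower bound on the average range to a bound involving an element of~$X$.
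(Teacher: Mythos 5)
Your proof is correct and follows essentially the same route as the paper's: reduce to a $\sigma(Z,X)$-null sequence with $\langle\nu(B),z_n\rangle\geq\epsilon$, pass to $L_1(\mu)$ via Lemmas~\ref{lem:RN-UI} and~\ref{lem:EquiIntegrable}, transfer the convex blocks back to a $\sigma(Z,X)$-null sequence $(\tilde z_k)$, and use the uniform lower bound on averages over $\Sigma_A^+$ to contradict condition~\eqref{eqn:GG}. The only differences (invoking Banach--Steinhaus directly instead of Lemma~\ref{lem:bdd}, and deriving the contradiction from a chosen point of $\overline{{\rm co}(\mathcal{A}_\nu(A))}^{w^*}\cap X$ rather than showing that intersection is empty) are purely cosmetic.
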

\begin{proof}
Suppose that $\nu(B)$ is not $\sigma(Z,X)$-sequentially continuous for some $B\in \Sigma$. Then there exist a $\sigma(Z,X)$-null sequence $(z_n)_{n\in \N}$ in~$Z$
and $\epsilon>0$ such that 
$$
	\int_B f_{z_n} \, d\mu = \langle \nu(B),z_n\rangle \geq \epsilon \quad
	\mbox{for all }n\in \N.
$$
Since $(z_n)_{n\in \N}$ is relatively compact in $(Z,\sigma(Z,X))$, it is norm-bounded (Lemma~\ref{lem:bdd}), 
so the sequence $(f_{z_n})_{n\in \N}$ is uniformly integrable
(Lemma~\ref{lem:RN-UI}). We can now apply Lemma~\ref{lem:EquiIntegrable} to find a convex block subsequence 
$(h_k)_{k\in \N}$ of~$(f_{z_n})_{n\in \N}$, $\eta>0$ and $A\in \Sigma_B^+$ such that 
\begin{equation}\label{eqn:CBS}
	h_k \geq \eta \, \text{ on $A$ for all $k\in \N$}.
\end{equation}
Clearly, $h_k=f_{\tilde{z}_k}$ for some convex block subsequence
$(\tilde{z}_k)_{k\in \N}$ of~$(z_n)_{n\in \N}$.

Given any $k\in \N$, we have 
$$
	\Big\langle \frac{\nu(C)}{\mu(C)},\tilde{z}_k \Big\rangle=\frac{1}{\mu(C)}\int_C f_{\tilde{z}_k} \, d\mu \stackrel{\eqref{eqn:CBS}}{\geq} \eta
	\quad\mbox{for every }C\in \Sigma_A^+,
$$
and therefore
\begin{equation}\label{eqn:CBS2}
	\langle z^*,\tilde{z}_k \rangle \geq \eta
	\quad \mbox{for every }z^*\in \overline{{\rm co}(\mathcal{A}_\nu(A))}^{w^*}.
\end{equation}
Since $(\tilde{z}_k)_{k\in \N}$ is $\sigma(Z,X)$-null
(because it is a convex block subsequence of the $\sigma(Z,X)$-null sequence $(z_n)_{n\in \N}$), 
from~\eqref{eqn:CBS2} it follows that 
$$
	\overline{{\rm co}(\mathcal{A}_\nu(A))}^{w^*}\cap X= \emptyset,
$$
which contradicts condition~\eqref{eqn:GG}.
\end{proof}

The following result is the key to prove Theorem~\ref{theo:MainB}.

\begin{theo}\label{theo:ConvexBlockCompactLocal}
Let $K \sub Z$ be a set such that ${\rm co}(K)$ is relatively convex block compact in~$(Z,\sigma(Z,X))$.
If $\nu\in ca(\mu,Z^*)$ satisfies condition~\eqref{eqn:GG}, 
then the restriction $\nu(B)|_K$ is $\sigma(Z,X)$-continuous for every $B\in \Sigma$.  
\end{theo}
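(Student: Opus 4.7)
The plan is to argue by contradiction, paralleling the proof of Theorem~\ref{theo:SequentialContinuity} but now applying its conclusion to a convex block sequence extracted from a witnessing net via the convex block compactness hypothesis. Suppose $\nu(B)|_K$ fails to be $\sigma(Z,X)$-continuous at some $z_0\in K$. Then there exist $\epsilon>0$ and a net $(z_\alpha)_{\alpha\in A}\subset K$ with $z_\alpha\to z_0$ in $\sigma(Z,X)$ and $|\langle\nu(B),z_\alpha-z_0\rangle|\geq\epsilon$ for all $\alpha$. Decomposing according to the sign of this inequality and, if necessary, replacing $\nu$ by $-\nu$ (which also satisfies~\eqref{eqn:GG}), we may assume $z_0\in\overline{W}^{\sigma(Z,X)}$, where $W:=\{z\in K:\langle\nu(B),z-z_0\rangle\geq\epsilon\}$.

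The key step, and the main obstacle, is to produce a sequence $(\tilde z_k)$ in~${\rm co}(W)$ that $\sigma(Z,X)$-converges to~$z_0$. Using the relative convex block compactness of~${\rm co}(K)\supset{\rm co}(W)$, I would start from a sequence $(z_n)\subset W$ chosen via a diagonal construction to approximate~$z_0$ coordinate-wise along a countable family of functionals in~$X$, then extract a convex block subsequence $(\tilde z_k)\subset{\rm co}(W)$ which $\sigma(Z,X)$-converges to some $\zeta\in Z$, and arrange the construction so that $\zeta=z_0$. This net-to-sequence reduction is where the convex block compactness hypothesis does essential work: it replaces a topological closure argument valid for nets by a sequential one valid for convex blocks, something that in a non-first-countable $\sigma(Z,X)$-topology is otherwise unavailable.

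Granting such $(\tilde z_k)$, each $\tilde z_k$ is a convex combination of elements of~$W$, so by linearity $\langle\nu(B),\tilde z_k-z_0\rangle\geq\epsilon$. Setting $\tilde w_k:=\tilde z_k-z_0$, the sequence $(\tilde w_k)\subset{\rm co}(K)-z_0$ is norm-bounded by Lemma~\ref{lem:bdd}, $\sigma(Z,X)$-null, and satisfies $\langle\nu(B),\tilde w_k\rangle\geq\epsilon$ for all~$k$. By Theorem~\ref{theo:SequentialContinuity}, $\nu(B)$ is $\sigma(Z,X)$-sequentially continuous on~$Z$, whence $\langle\nu(B),\tilde w_k\rangle\to 0$, a contradiction.
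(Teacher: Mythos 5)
There is a genuine gap, and it is exactly at the step you yourself flag as ``the main obstacle'': producing a sequence $(\tilde z_k)$ in ${\rm co}(W)$ that $\sigma(Z,X)$-converges to~$z_0$. Relative convex block compactness of ${\rm co}(K)$ only guarantees that a sequence chosen from $W$ has a convex block subsequence converging to \emph{some} $\zeta\in Z$; it gives no control whatsoever over what $\zeta$ is. Your diagonal construction can force the approximating sequence (and hence $\zeta$) to agree with $z_0$ on a prescribed \emph{countable} family of functionals in~$X$, but $\sigma(Z,X)$-convergence to $z_0$ requires agreement on all of~$X$, which is in general non-separable and non-metrizable on bounded sets; there is no canonical countable family that suffices, and ``arrange the construction so that $\zeta=z_0$'' cannot be arranged. (Note also that you cannot add $\nu(B)$ itself to the family of controlled functionals, since along the witnessing net $\langle\nu(B),z_\alpha-z_0\rangle$ stays $\geq\epsilon$ by construction, and $\nu(B)\in X$ is precisely what is in question.) If $\zeta\neq z_0$, your final step collapses: $\tilde z_k-z_0$ is no longer $\sigma(Z,X)$-null, and sequential continuity only yields $\langle\nu(B),\tilde z_k\rangle\to\langle\nu(B),\zeta\rangle$, which is not a contradiction. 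In effect, your plan assumes a Fr\'echet--Urysohn/Efremov-type property of $(Z,\sigma(Z,X))$ on ${\rm co}(K)$, which is a strictly stronger hypothesis than convex block compactness and is the situation already covered by Corollary~\ref{cor:Efremov}.

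The paper's proof avoids this net-to-sequence reduction by never trying to pin the convex block limit down to the net's limit. After normalizing ($z=0$, $\int_B f_{z_\alpha}\,d\mu\geq\epsilon$), it passes to a subnet with $(f_{z_\alpha})$ weakly convergent in $L_1(\mu)$ to some $f$, picks $A\in\Sigma_B^+$ and $\eta>0$ with $f\geq\eta$ on~$A$, and then argues \emph{one functional at a time}: for each fixed $x\in X$ it builds, via Mazur's theorem in $L_1(\mu)$, convex combinations $\tilde z_n$ of far-out terms of the net with $|\langle x,\tilde z_n\rangle|\leq 1/n$ and $\|f_{\tilde z_n}-f\|_{L_1(\mu)}\leq 1/n$, extracts a convex block limit $w$ (which depends on~$x$ and need not be~$0$), and shows $\langle x,w\rangle=0$ while $\langle z^*,w\rangle\geq\eta$ for every $z^*\in\overline{{\rm co}(\mathcal{A}_\nu(A))}^{w^*}$, using Theorem~\ref{theo:SequentialContinuity} applied to each $\nu(C)$, $C\in\Sigma_A^+$, together with the $L_1$-convergence. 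Hence $x\notin\overline{{\rm co}(\mathcal{A}_\nu(A))}^{w^*}$ for every $x\in X$, contradicting~\eqref{eqn:GG} directly, rather than contradicting the sequential continuity of $\nu(B)$ as you attempt. To repair your argument you would need to adopt this per-functional strategy (or an equivalent device); as written, the construction of a sequence in ${\rm co}(W)$ converging to $z_0$ in the full topology $\sigma(Z,X)$ is not justified and fails in general.
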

\begin{proof}
Note first that $K$ is norm-bounded by Lemma~\ref{lem:bdd}.
Suppose, by contradiction, that $\nu(B)|_K$ is not $\sigma(Z,X)$-continuous for some $B\in \Sigma$.
Since $\nu(B)|_K$ is bounded, there is a net $(z_\alpha)$ in~$K$
which $\sigma(Z,X)$-converges to some $z\in K$ and such that the net $(\langle \nu(B),z_\alpha\rangle)$
converges to $\lambda\in \mathbb{R}$ with $\lambda\neq \langle \nu(B),z\rangle$.
By linearity, we can assume that $z=0$ and $\lambda>0$. Now, we can also assume that for some $\epsilon>0$ we have
$$
	\int_B f_{z_\alpha} \, d\mu = \langle \nu(B),z_\alpha\rangle \geq \epsilon \quad
	\mbox{for all }\alpha.
$$ 
The net $(f_{z_\alpha})$ is uniformly integrable (apply Lemma~\ref{lem:RN-UI}) 
and by passing to a further subnet, 
not relabeled, we can suppose that $(f_{z_\alpha})$
is weakly convergent to some $f\in L_1(\mu)$. In particular, $\int_B f \, d\mu \geq \epsilon$.
Take $\eta>0$ and $A\in \Sigma_B^+$ such that 
\begin{equation}\label{eqn:big}
	f\geq \eta \, \text{ on~$A$}. 
\end{equation}
We will contradict condition~\eqref{eqn:GG} by proving the following claim.

{\em Claim:} $\overline{{\rm co}(\mathcal{A}_\nu(A))}^{w^*}\cap X=\emptyset$. Indeed, fix $x\in X$. Take any $n\in\N$. Then there is 
$\alpha_n$ such that 
$$
	|\langle x,z_\alpha \rangle|\leq \frac{1}{n}
	\quad\mbox{for all }\alpha\geq \alpha_n.
$$
By Mazur's theorem, we
can find $g_n\in {\rm co}\{f_{z_\alpha}:\alpha\geq \alpha_n\}$ such that
\begin{equation}\label{eqn:estrella}
	\|g_n-f\|_{L_1(\mu)}\leq \frac{1}{n}.
\end{equation}
Clearly, $g_n=f_{\tilde{z}_n}$ for some
$\tilde{z}_n\in {\rm co}\{z_\alpha:\alpha \geq \alpha_n\} \sub {\rm co}(K)$ and we have 
\begin{equation}\label{eqn:x}
	|\langle x,\tilde{z}_n \rangle|\leq \frac{1}{n}. 
\end{equation}

Since ${\rm co}(K)$ is relatively convex block compact in $(Z,\sigma(Z,X))$, there is a
convex block subsequence $(w_k)_{k\in \N}$ of~$(\tilde{z}_n)_{n\in \N}$
which $\sigma(Z,X)$-converges to some $w\in Z$. 
Note that $(\langle z^*,w_k \rangle)_{k\in\N}$ is a convex block subsequence
of $(\langle z^*,\tilde{z}_n\rangle)_{n\in \N}$ for every $z^*\in Z^*$.
By~\eqref{eqn:x} we have 
$$
	\langle x,w \rangle=\lim_{k\to \infty}\langle x,w_k\rangle=\lim_{n\to \infty}\langle x,\tilde{z}_n\rangle=0.
$$ 

Now, in order to show
that $x\not\in \overline{{\rm co}(\mathcal{A}_\nu(A))}^{w^*}$ we will check that $\langle z^*,w\rangle \geq \eta$ 
for every $z^*\in \overline{{\rm co}(\mathcal{A}_\nu(A))}^{w^*}$. Given any 
$C\in \Sigma_A^+$, inequalities~\eqref{eqn:big} and~\eqref{eqn:estrella} yield
$$
	\lim_{n\to\infty}\langle \nu(C),\tilde{z}_n\rangle =
	\lim_{n\to\infty} \int_C f_{\tilde{z}_n}\, d\mu=\int_C f \, d\mu \geq \eta \mu(C).
$$
On the other hand, the $\sigma(Z,X)$-sequential continuity of $\nu(C)$ (Theorem~\ref{theo:SequentialContinuity}) implies that 
$$
	\langle \nu(C),w \rangle=\lim_{k\to \infty} \langle \nu(C),w_k \rangle=\lim_{n\to\infty}\langle \nu(C),\tilde{z}_n\rangle.
$$
Hence $\langle \frac{\nu(C)}{\mu(C)},w \rangle\geq \eta$. This shows that $\langle z^*,w \rangle\geq \eta$ 
for every $z^*\in \overline{{\rm co}(\mathcal{A}_\nu(A))}^{w^*}$ and the {\em Claim} is proved.
\end{proof}

\begin{proof}[Proof of Theorem~\ref{theo:MainB}]
Fix $B\in \Sigma$. Since $(X,\mu(X,Z))$ is complete, in order to check that $\nu(B)\in X$ it suffices 
to show that the restriction $\nu(B)|_K$ is $\sigma(Z,X)$-continuous for each absolutely convex $\sigma(Z,X)$-compact set~$K \sub Z$.
This follows from Theorem~\ref{theo:ConvexBlockCompactLocal}, because
$K$ is relatively convex block compact in $(Z,\sigma(Z,X))$. Indeed,
let $(z_n)_{n\in\N}$ be a sequence in~$K$. Since $(r(z_n))_{n\in\N}$ is norm-bounded
(Lemma~\ref{lem:bdd}) and $(B_{X^*},w^*)$ is convex block compact, there is 
a convex block subsequence $(\tilde{z}_k)_{k\in \N}$ of~$(z_n)_{n\in\N}$
such that $(r(\tilde{z}_k))_{k\in \N}$ is $w^*$-convergent to some $x^*\in X^*$.
Bearing in mind that $K$ is convex and $r(K)$ is $w^*$-closed (it is $w^*$-compact), we have $x^*=r(z)$ for some $z\in K$. 
Therefore, $(\tilde{z}_k)_{k\in \N}$ is $\sigma(Z,X)$-convergent to~$z$. 
\end{proof}

Following~\cite{gui-mar-rod}, the Banach space $X$ is called {\em fully Mackey complete} 
if $(X,\mu(X,Y))$ is complete for any norm-closed subspace $Y\sub X^*$ which is norming for~$X$. 
Every Banach space having Efremov's property~($\mathcal{E}$)
is fully Mackey complete (see~\cite{gui-mar-rod}).
Thus, the next result (obtained under the set theoretic assumption that ``$\mathfrak{p}>\omega_1$'')
generalizes Corollary~\ref{cor:Efremov}.

\begin{cor}\label{cor:fully}
Suppose that $\mathfrak{p}>\omega_1$ and that $X$ is fully Mackey complete. 
If $\nu\in ca(\mu,Z^*)$ satisfies condition~\eqref{eqn:GG}, then $\nu(\Sigma) \sub X$.
\end{cor}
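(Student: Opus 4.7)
My strategy is to reduce the statement to Theorem~\ref{theo:MainB}: I need to verify that $(X,\mu(X,Z))$ is complete and that $(B_{X^{*}},w^{*})$ is convex block compact.

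Completeness of $(X,\mu(X,Z))$ follows almost for free from full Mackey completeness. The map $r\colon Z\to X^{*}$ is an isomorphic embedding, so $r(Z)\sub X^{*}$ is norm-closed (being complete in the inherited norm) and, as noted in the introduction, norming for~$X$. Applying full Mackey completeness to the norm-closed norming subspace $Y=r(Z)$ gives completeness of $(X,\mu(X,r(Z)))$. Since $r$ identifies the dual pair $\langle X,Z\rangle$ with $\langle X,r(Z)\rangle$, the Mackey topologies $\mu(X,Z)$ and $\mu(X,r(Z))$ coincide on~$X$, and hence $(X,\mu(X,Z))$ is complete.

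For convex block compactness of $(B_{X^{*}},w^{*})$ I would argue indirectly: under $\mathfrak{p}>\omega_{1}$, full Mackey completeness of $X$ should rule out isomorphic copies of $\ell_{1}$ inside~$X$, and Bourgain's theorem (recalled in the introduction) then delivers convex block compactness. The heuristic for the $\ell_{1}$-exclusion step is that an $\ell_{1}$-copy in~$X$ can be used, via a pseudo-intersection argument of cardinality~$\omega_{1}$, to construct a norm-closed norming subspace $Y\sub X^{*}$ together with a $\mu(X,Y)$-Cauchy net in~$X$ having no limit in~$X$, contradicting full Mackey completeness. Constructions of precisely this type appear in~\cite{gui-mar-rod,gui-mon-ziz}, and it is here that the set-theoretic assumption $\mathfrak{p}>\omega_{1}$ genuinely enters.

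I expect this $\ell_{1}$-exclusion to be the main obstacle; once in hand, a single appeal to Theorem~\ref{theo:MainB} closes the proof. A viable fallback, in case the $\ell_{1}$ route turns out to be awkward, is to bypass Theorem~\ref{theo:MainB} and reduce instead to Theorem~\ref{theo:MainA}, by showing that under $\mathfrak{p}>\omega_{1}$ full Mackey completeness of~$X$ upgrades to the Mazur property of $(Z,\sigma(Z,X))$, again leveraging the machinery of~\cite{gui-mar-rod}.
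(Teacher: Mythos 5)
Your reduction to Theorem~\ref{theo:MainB} is the right strategy, and the completeness half is exactly what the paper does: apply full Mackey completeness to the norm-closed norming subspace $r(Z)\sub X^*$ and identify $\mu(X,Z)$ with $\mu(X,r(Z))$. The gap is in the convex block compactness half. Full Mackey completeness does \emph{not} rule out isomorphic copies of~$\ell_1$: for instance $\ell_1$ itself is separable, so $(B_{\ell_\infty},w^*)$ is metrizable, hence $\ell_1$ has Efremov's property~($\mathcal{E}$) and is therefore fully Mackey complete by~\cite{gui-mar-rod}, while obviously containing~$\ell_1$. So the proposed pseudo-intersection construction of a bad norming subspace from a copy of~$\ell_1$ cannot exist, and the Bourgain route is unavailable. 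A sanity check points the same way: Bourgain's theorem is a ZFC result, so if your $\ell_1$-exclusion were true the corollary would not need $\mathfrak{p}>\omega_1$ at all.

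What the cited machinery actually yields (\cite[Corollary~4.3]{gui-mar-rod}) is that a fully Mackey complete space contains no copy of $\ell_1(\omega_1)$ --- uncountably many generators, not countably many. The paper then invokes the Haydon--Levy--Odell criterion \cite[3D]{hay-lev-ode}: if $X$ contains no isomorphic copy of $\ell_1(\mathfrak{p})$, then $(B_{X^*},w^*)$ is convex block compact. The hypothesis $\mathfrak{p}>\omega_1$ enters precisely here, to pass from ``no copy of $\ell_1(\omega_1)$'' to ``no copy of $\ell_1(\mathfrak{p})$''; it has nothing to do with excluding separable~$\ell_1$. Your fallback (upgrading full Mackey completeness to the Mazur property of $(Z,\sigma(Z,X))$ and using Theorem~\ref{theo:MainA}) is also unsupported: the paper explicitly notes that completeness of $(X,\mu(X,Z))$ is strictly weaker than the Mazur property of $(Z,\sigma(Z,X))$ (see \cite[Proposition~10]{gui-mon-ziz}), and no implication in the needed direction is available. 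Replacing your $\ell_1$-exclusion step by the $\ell_1(\omega_1)$-exclusion plus \cite[3D]{hay-lev-ode} repairs the argument and recovers the paper's proof.
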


Before the proof of Corollary~\ref{cor:fully}, recall that~$\mathfrak{p}$ is the least cardinality of a family $\mathcal{M}$ of infinite
subsets of~$\N$ such that every finite subfamily of~$\mathcal{M}$ has infinite intersection, but there is no infinite set 
$A \sub \N$ such that $A \setminus M$ is finite for all $M\in \mathcal{M}$.
In general, $\mathfrak{p}$ lies between $\omega_1$ (the first uncountable ordinal) and $\mathfrak{c}$ (the cardinal of the continuum).
Martin's Axiom implies that $\mathfrak{p}=\mathfrak{c}$, so one has $\mathfrak{p}>\omega_1$ subject
to Martin's Axiom and the negation of the Continuum Hypothesis. We refer the reader to~\cite{bla-J} for 
detailed information on cardinal~$\mathfrak{p}$. 

\begin{proof}[Proof of Corollary~\ref{cor:fully}] Any fully Mackey complete Banach space cannot contain isomorphic copies of~$\ell_1(\omega_1)$, see
\cite[Corollary~4.3]{gui-mar-rod}. Therefore, $X$ contains no isomorphic copy of~$\ell_1(\mathfrak{p})$ which, under the assumption
that $\mathfrak{p}>\omega_1$, implies that $(B_{X^*},w^*)$ is convex block compact, see \cite[3D]{hay-lev-ode}.
The conclusion follows from Theorem~\ref{theo:MainB}, bearing in mind that the norm-closed
subspace $r(Z) \sub X^*$ is norming for~$X$ and so $(X,\mu(X,Z))=(X,\mu(X,r(Z)))$ is complete.
\end{proof}

We stress that the absence of isomorphic copies of~$\ell_1(\mathfrak{c})$
is necessary for the convex block compactness of $(B_{X^*},w^*)$, see \cite[p.~269, Remark~3]{mor-J}.

\subsection*{Acknowledgements}
The author wishes to thank A.J. Guirao for helpful suggestions.
Research supported by projects 
MTM2017-86182-P (AEI/FEDER, UE) and 19275/PI/14 (Fundaci\'on S\'eneca).

\bibliographystyle{amsplain}

\end{document}